\newtheorem{theorem}{Theorem}[section]
\numberwithin{equation}{section}
\title{Some Generalized Harmonic Number Identities}
\author{M.J. Kronenburg}
\begin{document}

\maketitle

\begin{abstract}
Summation by parts is used to find the sum of a finite series of generalized harmonic numbers
involving a specific polynomial or rational function.
The Euler-Maclaurin formula for sums of powers is used to find
the sums of some finite series of generalized harmonic numbers involving nonnegative integer powers,
which can be used to evaluate the sums of the finite series of generalized harmonic numbers involving polynomials.
Many examples and a computer program are provided.
\end{abstract}

\noindent
\textbf{Keywords}: generalized harmonic number.\\
\textbf{MSC 2010}: 11B99

\section{Harmonic Numbers and Sums of Powers}

Let the generalized harmonic number for nonnegative $n$, complex order $m$
and complex offset $c$ be defined as:
\begin{equation}\label{genharmonicdef}
 H_{c,n}^{(m)} = \sum_{k=1}^n \frac{1}{(c+k)^m}
\end{equation}
For ease of notation the following convention is used:
\begin{equation}\label{harmonicdef}
 H_n^{(m)} = H_{0,n}^{(m)}
\end{equation}
The traditional harmonic numbers are a special case:
\begin{equation}\label{harmonictrad}
 H_n = H_n^{(1)}
\end{equation}
The following sum of powers can be expressed as a generalized harmonic number
for nonnegative integer $p$:
\begin{equation}\label{powsumdef}
 \sum_{k=1}^n k^p = H_{n}^{(-p)}
\end{equation} 
Let the $B_n^+$ be the Bernoulli numbers with $B_1^+=1/2$ instead of $B_1=-1/2$:
\begin{equation}\label{berndef}
 B_n^+ = (-1)^n B_n = B_n + \delta_{n,1}
\end{equation}
Then application of the Euler-Maclaurin summation formula \cite{V06}
to the sum (\ref{powsumdef}) yields for nonnegative integer $p$:
\begin{equation}\label{faulhaber}
 H_{n}^{(-p)} = \frac{1}{p+1}\sum_{k=1}^{p+1}\binom{p+1}{k}B_{p-k+1}^+ n^k
\end{equation}
This formula expresses any power sum (\ref{powsumdef}) with nonnegative integer $p$
in a polynomial in $n$ of degree $p+1$,
and is used below to obtain a similar reduction for some finite series
of generalized harmonic numbers.
For a list of the resulting expressions of formula (\ref{faulhaber}) up to $p=5$,
see section \ref{examples} below.
In all following derivations it is assumed that $0^0=1$ \cite{GKP94,K97}.

\section{Summation by Parts}

For application in section \ref{sect} below summation by parts is defined and proved.

\begin{theorem}
Let $a\leq b$ be two integers and let $\{x_k\}$ and $\{y_k\}$ be sequences of complex numbers.
Let $\{s_k\}$ be the sequence of complex numbers defined by:
\begin{equation}\label{sdef}
 s_k = \sum_{i=a}^k x_i
\end{equation}
Then there is the following summation by parts formula:
\begin{equation}\label{sumparts}
 \sum_{k=a}^{b-1} x_k y_k = s_{b-1} y_b - \sum_{k=a}^{b-1} s_k ( y_{k+1} - y_k )
\end{equation}
\end{theorem}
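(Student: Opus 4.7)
The plan is to use the discrete analog of integration by parts, namely expressing $x_k$ as a telescoping difference of the partial sums $s_k$ and rearranging. First I would observe that from the definition (\ref{sdef}) one has $x_k = s_k - s_{k-1}$ for every $k \geq a$, provided we adopt the natural convention $s_{a-1} = 0$ (which is consistent with the empty-sum convention). The edge case $b = a$ is trivial since both sides reduce to empty sums, so I would assume $b > a$ in the main argument.

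Next I would substitute this expression into the left-hand side to obtain
\[
\sum_{k=a}^{b-1} x_k y_k = \sum_{k=a}^{b-1} s_k y_k - \sum_{k=a}^{b-1} s_{k-1} y_k.
\]
The key manipulation is then to reindex the second sum by shifting $k \mapsto k+1$, turning it into $\sum_{k=a-1}^{b-2} s_k y_{k+1}$. Using $s_{a-1} = 0$, the lower limit effectively becomes $a$, yielding $\sum_{k=a}^{b-2} s_k y_{k+1}$.

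Finally I would split off the $k = b-1$ term of the first sum to align the ranges of summation and combine the two sums into a single sum of $s_k(y_k - y_{k+1})$. After factoring out the minus sign, this produces exactly the right-hand side $s_{b-1} y_b - \sum_{k=a}^{b-1} s_k(y_{k+1} - y_k)$, where one extra term $-s_{b-1}(y_b - y_{b-1})$ is added and compensated so that the sum index reaches $b-1$ as stated.

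The argument is essentially a bookkeeping exercise, so the only real obstacle is keeping the index shifts and boundary terms straight; in particular one must verify carefully that the boundary contribution at the upper end produces $s_{b-1} y_b$ rather than $s_{b-1} y_{b-1}$. No deep idea beyond Abel summation is needed.
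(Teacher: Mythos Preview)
Your proposal is correct and follows essentially the same approach as the paper: both proofs rest on the identity $x_k = s_k - s_{k-1}$ together with the convention $s_{a-1}=0$. The only cosmetic difference is that the paper moves the second sum to the left and recognises the combined summand $s_k y_{k+1}-s_{k-1}y_k$ as telescoping directly, whereas you carry out the equivalent reindexing and boundary-term bookkeeping explicitly.
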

\begin{proof}
From (\ref{sdef}) follows:
\begin{equation}
 x_k = s_k - s_{k-1}
\end{equation}
Then (\ref{sumparts}) becomes:
\begin{equation}
\begin{split}
 s_{b-1} y_b & = \sum_{k=a}^{b-1} [ ( s_k - s_{k-1} ) y_k + s_k ( y_{k+1} - y_k ) ] \\
  & = \sum_{k=a}^{b-1} ( s_k y_{k+1} - s_{k-1} y_k ) \\
  & = s_{b-1} y_b - s_{a-1} y_a
\end{split}
\end{equation}
Since $s_{a-1}=0$ by definition (\ref{sdef}), the theorem is proved.
\end{proof}
\vspace{5mm}

\section{Generalized Harmonic Number Identities}\label{sect}

The following identity is a finite series of generalized harmonic numbers involving a certain
polynomial or rational function.
\begin{theorem}
For nonnegative integer $n$ and complex $m$, $w$:
\begin{equation}
 \sum_{k=0}^n [ (k+1)^w - k^w ] H_k^{(m)} = (n+1)^w H_n^{(m)} - H_n^{(m-w)}
\end{equation}
\end{theorem}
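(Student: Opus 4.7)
The plan is to apply the summation by parts formula proved in the previous theorem, taking $x_k = (k+1)^w - k^w$ and $y_k = H_k^{(m)}$, with $a = 0$ and $b = n+1$. The sum $x_k$ was chosen so as to telescope: assuming $w \neq 0$ (the case $w=0$ is trivial since both sides vanish), we get
\begin{equation*}
s_k = \sum_{i=0}^{k} \bigl[(i+1)^w - i^w\bigr] = (k+1)^w - 0^w = (k+1)^w.
\end{equation*}
The forward difference of $y$ is immediate from the definition \eqref{genharmonicdef}, namely $y_{k+1} - y_k = 1/(k+1)^m$.

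Substituting these into \eqref{sumparts} with $b = n+1$, the boundary term becomes $s_n y_{n+1} = (n+1)^w H_{n+1}^{(m)}$, while the remaining sum becomes
\begin{equation*}
 \sum_{k=0}^{n} s_k(y_{k+1} - y_k) = \sum_{k=0}^{n} \frac{(k+1)^w}{(k+1)^m} = \sum_{j=1}^{n+1} j^{w-m} = H_{n+1}^{(m-w)}.
\end{equation*}
Thus the summation by parts formula yields the intermediate identity
\begin{equation*}
 \sum_{k=0}^{n} \bigl[(k+1)^w - k^w\bigr] H_k^{(m)} = (n+1)^w H_{n+1}^{(m)} - H_{n+1}^{(m-w)}.
\end{equation*}

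The last step is to rewrite the right-hand side in terms of $H_n$ rather than $H_{n+1}$. Using $H_{n+1}^{(m)} = H_n^{(m)} + (n+1)^{-m}$ and $H_{n+1}^{(m-w)} = H_n^{(m-w)} + (n+1)^{w-m}$, the two stray terms $\pm(n+1)^{w-m}$ cancel, producing exactly the claimed right-hand side $(n+1)^w H_n^{(m)} - H_n^{(m-w)}$. The only subtle point is the treatment of the endpoint $0^w$ in the telescoping; this is harmless because for $w \neq 0$ we genuinely have $0^w = 0$, and for $w = 0$ the identity is a tautology $0 = 0$, so no convention on $0^0$ is actually needed to close the argument.
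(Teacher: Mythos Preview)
Your proof is correct and follows exactly the same route as the paper: summation by parts with $x_k=(k+1)^w-k^w$, $y_k=H_k^{(m)}$, $a=0$, $b=n+1$, the telescoped $s_k=(k+1)^w$, and then the passage from $H_{n+1}$ to $H_n$ via the one-term recursion. The paper presents the identical computation more tersely (it simply writes $s_k=(k+1)^w$ under its standing convention $0^0=1$ and does not single out $w=0$), so your extra remarks about the endpoint are additional care rather than a different argument.
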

\begin{proof}
The summation by parts theorem (\ref{sumparts}) is applied with \mbox{$a=0$}, \mbox{$b=n+1$},
$x_k=(k+1)^w-k^w$ and $y_k=H_k^{(m)}$,
where the $s_k$ become:
\begin{equation}
 s_k = \sum_{i=0}^k [ (i+1)^w - i^w ] = (k+1)^w
\end{equation}
and (\ref{sumparts}) becomes:
\begin{equation}
\begin{split}
 \sum_{k=0}^n [ (k+1)^w - k^w ] H_k^{(m)} 
    & = (n+1)^w H_{n+1}^{(m)} - \sum_{k=0}^n (k+1)^w (k+1)^{-m} \\
    & = (n+1)^w H_{n+1}^{(m)} - H_{n+1}^{(m-w)} \\
    & = (n+1)^w H_n^{(m)} - H_n^{(m-w)}
\end{split}
\end{equation}
\end{proof}
Let the following sum be given \cite{GKP94,K93,SLL89,S90}:
\begin{equation}
 \sum_{k=1}^n \frac{1}{k} H_k = \frac{1}{2} (H_n^2 + H_n^{(2)})
\end{equation}
Then applying this theorem for $m=1$ and $w=-1$
and $H_n^{(m)}=H_{n+1}^{(m)}-1/(n+1)^m$ yields:
\begin{equation}
 \sum_{k=0}^n \frac{1}{k+1} H_k = \frac{1}{2} (H_{n+1}^2 - H_{n+1}^{(2)})
\end{equation}

The following identities are finite series of generalized harmonic numbers
involving nonnegative integer powers, which can be used to evaluate
finite series of generalized harmonic numbers involving polynomials \cite{SLL89,S90}.

\begin{theorem}
For nonnegative integer $n$, $p$  and complex $m$:
\begin{equation}\label{harmsumf}
\begin{split}
 & \sum_{k=0}^n k^p H_k^{(m)} = F(n,p,m) \\
 & = H_n^{(-p)} H_n^{(m)} + H_n^{(m-p)}
  - \frac{1}{p+1} \sum_{k=1}^{p+1} \binom{p+1}{k} B_{p-k+1}^+ H_n^{(m-k)}
\end{split}
\end{equation}
\end{theorem}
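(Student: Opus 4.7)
The plan is to apply the summation-by-parts identity (\ref{sumparts}) with $a=0$, $b=n+1$, $x_k = k^p$, and $y_k = H_k^{(m)}$. Adopting the convention $0^0=1$ from the preamble, the partial sums $s_k = \sum_{i=0}^k i^p$ coincide with $H_k^{(-p)}$ via (\ref{powsumdef}) for every $p \geq 0$, while the forward difference $y_{k+1}-y_k$ collapses to $(k+1)^{-m}$. A single application of (\ref{sumparts}) therefore reduces the identity to be proved to
\begin{equation*}
 F(n,p,m) = H_n^{(-p)} H_{n+1}^{(m)} - \sum_{k=0}^n \frac{H_k^{(-p)}}{(k+1)^m}.
\end{equation*}

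I would next re-index the residual sum by $j=k+1$, peel off the $j=n+1$ boundary term, and rewrite the remaining summand using the recursion $H_{j-1}^{(-p)} = H_j^{(-p)} - j^p$. The $j^p$ contributions gather into a single generalized harmonic number $H_n^{(m-p)}$, and the extracted boundary term $H_n^{(-p)}/(n+1)^m$ combines with the leading $H_n^{(-p)} H_{n+1}^{(m)}$ from the previous step to collapse $H_{n+1}^{(m)}$ down to $H_n^{(m)}$. The intermediate identity that emerges is
\begin{equation*}
 F(n,p,m) = H_n^{(-p)} H_n^{(m)} + H_n^{(m-p)} - \sum_{j=1}^n \frac{H_j^{(-p)}}{j^m}.
\end{equation*}

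In the final step I would substitute the Faulhaber expansion (\ref{faulhaber}) for $H_j^{(-p)}$ inside the remaining sum. Since that expansion presents $H_j^{(-p)}$ as a polynomial in $j$ with zero constant term and Bernoulli-type coefficients, interchanging the two summations converts each inner $\sum_{j=1}^n j^{k-m}$ into $H_n^{(m-k)}$ while the coefficients pass through intact, yielding precisely the right-hand side of (\ref{harmsumf}).

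The step I expect to require the most care is the boundary bookkeeping in the middle paragraph: the $j=n+1$ summand peeled off during the re-indexing must match exactly the $1/(n+1)^m$ difference between $H_{n+1}^{(m)}$ and $H_n^{(m)}$, so that the product $H_n^{(-p)} H_n^{(m)}$ appears cleanly with no residual terms. The degenerate $p=0$ case is automatic under the $0^0=1$ convention, since then Faulhaber correctly returns $H_n^{(0)} = n$ and the whole derivation specializes without issue.
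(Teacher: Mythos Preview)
Your proposal is correct and reaches the goal, but by a different path than the paper. The paper does not invoke summation by parts for this theorem; instead it writes out the double sum $\sum_{k=1}^n k^p \sum_{i=1}^k i^{-m}$, interchanges the order of summation to obtain $\sum_{i=1}^n i^{-m} \sum_{k=i}^n k^p$, and then splits the inner power sum as $\sum_{k=1}^n k^p + i^p - \sum_{k=1}^i k^p$. That immediately produces the three pieces $H_n^{(-p)} H_n^{(m)}$, $H_n^{(m-p)}$, and $-\sum_i H_i^{(-p)}/i^m$, exactly the same intermediate expression you arrive at after your re-indexing and boundary-term bookkeeping. Both arguments then finish identically by inserting the Faulhaber expansion (\ref{faulhaber}) into the remaining sum. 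Your route has the appeal of actually using the summation-by-parts machinery the paper sets up in (\ref{sumparts}), at the cost of the extra step of peeling off the $j=n+1$ term and recombining it; the paper's route is a line shorter but bypasses that machinery in favor of a bare interchange of sums.

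One small quibble: your claim that $s_k = \sum_{i=0}^k i^p$ coincides with $H_k^{(-p)}$ is literally off by one when $p=0$, since under $0^0=1$ you get $s_k = k+1$ while $H_k^{(0)}=k$. This does not damage the argument---the constant offset washes out of (\ref{sumparts}) because $y_0 = H_0^{(m)} = 0$, so your first displayed equation remains valid---but the sentence justifying it in the $p=0$ case should be rephrased.
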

\begin{proof}
The order of summation over $k$ and $i$ is reversed:
\begin{equation}
 \sum_{k=1}^n k^p H_k^{(m)} = \sum_{k=1}^n k^p \sum_{i=1}^k \frac{1}{i^m}
  = \sum_{i=1}^n \frac{1}{i^m} \sum_{k=i}^n k^p
\end{equation}
The sum over $k$ is splitted into three parts:
\begin{equation}
 \sum_{k=i}^n k^p = \sum_{k=1}^n k^p + i^p - \sum_{k=1}^i k^p
\end{equation}
The last sum is expressed by (\ref{faulhaber}), and the order
of summation over $k$ and $i$ is reversed again.
\end{proof}

For a list of the resulting expressions of formula (\ref{harmsumf}) up to $p=5$
and $m=4$ see section \ref{examples} below.

\begin{theorem}
For nonnegative integer $n$, $p$  and complex $m$:
\begin{equation}
\begin{split}\label{harmsumg}
 & \sum_{k=0}^n k^p H_{n-k}^{(m)} = G(n,p,m) = H_n^{(-p)} H_n^{(m)} + \delta_{p,0} H_n^{(m)}\\
 & + \frac{1}{p+1} \sum_{k=1}^{p+1} (-1)^k \binom{p+1}{k} 
       [ B_{p-k+1}^+ + (p-k+1)H_n^{(k-p)} ] H_n^{(m-k)}
\end{split}
\end{equation}
\end{theorem}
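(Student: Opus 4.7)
The plan is to mirror the proof of Theorem 3.2 (the formula for $F(n,p,m)$), but with an extra bookkeeping step to track the $\delta_{p,0}$ correction and to reduce the resulting double sum to the single sum over $H_n^{(m-k)}$ shown in (\ref{harmsumg}).

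First I would rewrite the left-hand side as the double sum
\[
\sum_{k=0}^n k^p H_{n-k}^{(m)} = \sum_{k=0}^n k^p \sum_{i=1}^{n-k}\frac{1}{i^m},
\]
and reverse the order of summation so that $i$ is outer:
\[
=\sum_{i=1}^n \frac{1}{i^m}\sum_{k=0}^{n-i} k^p.
\]
Under the convention $0^0=1$, the inner sum equals $H_{n-i}^{(-p)}+\delta_{p,0}$, so the $\delta_{p,0}$ contribution pulls out as $\delta_{p,0}H_n^{(m)}$ and accounts for that term of $G$.

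Next, I would expand $H_{n-i}^{(-p)}$ by the Euler-Maclaurin/Faulhaber formula (\ref{faulhaber}) and the binomial theorem:
\[
H_{n-i}^{(-p)} = \frac{1}{p+1}\sum_{k=1}^{p+1}\binom{p+1}{k}B_{p-k+1}^+\sum_{j=0}^{k}\binom{k}{j}(-1)^{j} n^{k-j} i^{j}.
\]
Inserting this and summing $i^{j-m}/1$ over $i=1,\dots,n$ converts the $i$-sum into $H_n^{(m-j)}$. Swapping the resulting $j$- and $k$-sums leaves an expression $\sum_{j=0}^{p+1}(-1)^j H_n^{(m-j)}\,C_j/(p+1)$ where
\[
C_j = \sum_{k=\max(1,j)}^{p+1}\binom{p+1}{k}\binom{k}{j}B_{p-k+1}^+ n^{k-j}.
\]

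Finally I would separate $j=0$ from $j\geq 1$. For $j=0$, the inner sum is exactly $(p+1)H_n^{(-p)}$ by (\ref{faulhaber}), which delivers the $H_n^{(-p)}H_n^{(m)}$ term. For $j\geq 1$, I would use $\binom{p+1}{k}\binom{k}{j}=\binom{p+1}{j}\binom{p+1-j}{k-j}$ and the substitution $l=k-j$, $q=p-j$, to rewrite
\[
C_j = \binom{p+1}{j}\sum_{l=0}^{q+1}\binom{q+1}{l}B_{q-l+1}^+ n^{l}.
\]
The $l=0$ term contributes $B_{q+1}^+=B_{p-j+1}^+$, while the $l\geq 1$ part is $(q+1)H_n^{(-q)}=(p-j+1)H_n^{(j-p)}$ by a second application of (\ref{faulhaber}). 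Combining these reproduces exactly the bracketed factor in (\ref{harmsumg}). The main obstacle is this last step: one has to recognize that after the index swap the $l=0$ piece is what becomes the $B_{p-k+1}^+$ inside the bracket, rather than getting absorbed into the Faulhaber sum, and to carefully verify the boundary cases $j=0$, $j=p$, and $j=p+1$.
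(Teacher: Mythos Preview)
Your proposal is correct and follows essentially the same path as the paper: reverse the order of summation, evaluate the inner power sum by Faulhaber's formula (\ref{faulhaber}), expand $(n-i)^k$ by the binomial theorem, apply the trinomial revision $\binom{p+1}{k}\binom{k}{j}=\binom{p+1}{j}\binom{p+1-j}{k-j}$, split off the $j=0$ term to recover $H_n^{(-p)}H_n^{(m)}$, and recognize the remaining inner sum as $B_{p-j+1}^{+}+(p-j+1)H_n^{(j-p)}$ via a second application of (\ref{faulhaber}). The only cosmetic difference is your explicit extraction of the $\delta_{p,0}$ correction from the $k=0$ term of the inner sum, whereas the paper begins the outer sum at $k=1$ and implicitly attributes $\delta_{p,0}H_n^{(m)}$ to the omitted $k=0$ summand.
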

\begin{proof}
The order of summation over $k$ and $i$ is reversed:
\begin{equation}
 \sum_{k=1}^n k^p H_{n-k}^{(m)} = \sum_{k=1}^n k^p \sum_{i=1}^{n-k} \frac{1}{i^m}
  = \sum_{i=1}^n \frac{1}{i^m} \sum_{k=1}^{n-i} k^p
\end{equation}
The sum over $k$ is evaluated with (\ref{faulhaber}):
\begin{equation}
 \sum_{k=1}^{n-i} k^p
  = \frac{1}{p+1} \sum_{k=1}^{p+1} \binom{p+1}{k} B_{p-k+1}^+ (n-i)^k
\end{equation}
On the right side the power is evaluated with the binomial theorem:
\begin{equation}
 (n-i)^k = \sum_{l=0}^k (-1)^l \binom{k}{l} n^{k-l} i^l
\end{equation}
The trinomial revision of binomial coefficients is used:
\begin{equation}
 \binom{p+1}{k}\binom{k}{l} = \binom{p+1}{l}\binom{p-l+1}{k-l}
\end{equation}
The terms with $l=0$ are splitted off, the order of summation over $k$ and $l$
is reversed, and $k$ and $l$ are interchanged:
\begin{equation}
 \sum_{k=1}^{n-i} k^p
 = \frac{1}{p+1} \sum_{k=1}^{p+1} \binom{p+1}{k} ( B_{p-k+1}^+ n^k + 
   (-1)^k i^k \sum_{l=k}^{p+1} \binom{p-k+1}{l-k} B_{p-l+1}^+ n^{l-k} )
\end{equation}
The first term is evaluated with (\ref{faulhaber}),
and for the second term the summation range over $l$ is changed and (\ref{faulhaber}) is used:
\begin{equation}
  \sum_{l=0}^{p-k+1}\binom{p-k+1}{l}B_{p-k-l+1}^+ n^l = B_{p-k+1}^+ + (p-k+1) H_n^{(k-p)}
\end{equation}
The order of summation over $k$ and $i$ is reversed again.
\end{proof}

For a list of the resulting expressions of formula (\ref{harmsumg}) up to $p=5$
and $m=3$ see section \ref{examples} below.

\section{Generalized Harmonic Numbers Identities with Nonzero Offsets}

The formulas above can be used to derive identities for generalized
harmonic numbers (\ref{genharmonicdef}) with nonnegative integer offset $c=s$.
For this the following equation is used with nonnegative integer $s$:
\begin{equation}\label{offset1}
 H_{s,n}^{(m)} = H_{s+n}^{(m)} - H_s^{(m)}
\end{equation}
which leads to:
\begin{equation}\label{offset2}
\begin{split}
 \sum_{k=0}^n k^p H_{s,k}^{(m)} & = \sum_{k=0}^n k^p ( H_{s+k}^{(m)} - H_s^{(m)} ) \\
 & = -H_s^{(m)} ( H_n^{(-p)} + \delta_{p,0} ) + \sum_{k=0}^n k^p H_{s+k}^{(m)}
\end{split}
\end{equation}
The last sum can be evaluated as follows:
\begin{equation}\label{offset3}
\begin{split}
 \sum_{k=0}^n k^p H_{s+k}^{(m)} & = \sum_{k=s}^{n+s} (k-s)^p H_k^{(m)} \\
 & = \sum_{k=0}^{n+s} (k-s)^p H_k^{(m)} - \sum_{k=0}^{s-1} (k-s)^p H_k^{(m)}
\end{split}
\end{equation}
The power can be expanded with the binomial theorem,
and changing the order of summation results with $F(n,p,m)$ from (\ref{harmsumf}) in:
\begin{equation}\label{offset5}
 \sum_{k=0}^n k^p H_{s+k}^{(m)} = \sum_{k=0}^p (-1)^k \binom{p}{k} s^k
  \left[ F(n+s,p-k,m) - F(s-1,p-k,m) \right]
\end{equation}
which can be substituted in (\ref{offset2}).
In the case $s=0$ the only nonzero term in this formula is for $k=0$ with $s^k=0^0=1$,
and because $F(-1,p,m)=0$ the result in this case is $F(n,p,m)$ as expected.\\

For a list of the resulting expressions of formula (\ref{offset5}) with $s=n$ up to $p=5$
and $m=2$ and with $s=2n$ up to $p=5$ and $m=1$ see section \ref{examples} below.

Similar to the derivation above the second type of sums can be evaluated:
\begin{equation}\label{offset6}
\begin{split}
 \sum_{k=0}^n k^p H_{s,n-k}^{(m)} & = \sum_{k=0}^n k^p ( H_{s+n-k}^{(m)} - H_s^{(m)} ) \\
 & = -H_s^{(m)} ( H_n^{(-p)} + \delta_{p,0} ) + \sum_{k=0}^n k^p H_{s+n-k}^{(m)}
\end{split}
\end{equation}
The last sum can be evaluated as follows:
\begin{equation}\label{offset7}
 \sum_{k=0}^n k^p H_{s+n-k}^{(m)} 
  = \sum_{k=0}^{n+s} k^p H_{s+n-k}^{(m)} - \sum_{k=0}^{s-1} (n+1+k)^p H_{s-1-k}^{(m)}
\end{equation}
The power can be expanded with the binomial theorem,
and changing the order of summation results with $G(n,p,m)$ from (\ref{harmsumg}) in:
\begin{equation}\label{offset9}
 \sum_{k=0}^n k^p H_{s+n-k}^{(m)} = G(n+s,p,m)
 - \sum_{k=0}^p \binom{p}{k} (n+1)^{p-k} G(s-1,k,m)
\end{equation}
which can be substituted in (\ref{offset6}).

For a list of the resulting expressions of formula (\ref{offset9}) with $s=n$ up to $p=5$
and $m=1$ see section \ref{examples} below.
\vspace{5mm}

\section{Examples}\label{examples}

Using the computer program of section \ref{program} below,
a list of many resulting expressions of the main formulas is provided.\\
Formula (\ref{faulhaber}):
\begin{equation}
 H_n^{(0)} = n
\end{equation}
\begin{equation}
 H_n^{(-1)} = \frac{1}{2}n(n+1)
\end{equation}
\begin{equation}
 H_n^{(-2)} = \frac{1}{6}n(n+1)(2n+1)
\end{equation}
\begin{equation}
 H_n^{(-3)} = \frac{1}{4}n^2(n+1)^2
\end{equation}
\begin{equation}
 H_n^{(-4)} = \frac{1}{30}n(n+1)(2n+1)(3n^2+3n-1)
\end{equation}
\begin{equation}
 H_n^{(-5)} = \frac{1}{12}n^2(n+1)^2(2n^2+2n-1)
\end{equation}
\vspace*{3mm}
Formula (\ref{harmsumf}):
\begin{equation}
 \sum_{k=0}^n H_k = (n+1)H_{n+1} - (n+1)
\end{equation}
\begin{equation}
 \sum_{k=0}^n k H_k = H_n^{(-1)} H_{n+1} - \frac{1}{4}n(n+1)
\end{equation}
\begin{equation}
 \sum_{k=0}^n k^2 H_k = H_n^{(-2)} H_{n+1} - \frac{1}{36}n(n+1)(4n+5)
\end{equation}
\begin{equation}
 \sum_{k=0}^n k^3 H_k = H_n^{(-3)} H_{n+1} - \frac{1}{48}n(n+1)(n+2)(3n+1)
\end{equation}
\begin{equation}
 \sum_{k=0}^n k^4 H_k = H_n^{(-4)} H_{n+1} - \frac{1}{1800}n(n+1)(72n^3+243n^2+167n-32)
\end{equation}
\begin{equation}
 \sum_{k=0}^n k^5 H_k = H_n^{(-5)} H_{n+1} - \frac{1}{720}n(n+1)(n+2)(2n+1)(10n^2+19n-9)
\end{equation}
\begin{equation}
 \sum_{k=0}^n H_k^{(2)} = (n+1)H_{n+1}^{(2)} - H_{n+1}
\end{equation}
\begin{equation}
 \sum_{k=0}^n k H_k^{(2)} = H_n^{(-1)}H_{n+1}^{(2)} + \frac{1}{2}H_{n+1} - \frac{1}{2}(n+1)
\end{equation}
\begin{equation}
 \sum_{k=0}^n k^2 H_k^{(2)} = H_n^{(-2)}H_{n+1}^{(2)} - \frac{1}{6}H_{n+1} - \frac{1}{6}(n+1)(n-1)
\end{equation}
\begin{equation}
 \sum_{k=0}^n k^3 H_k^{(2)} = H_n^{(-3)}H_{n+1}^{(2)} - \frac{1}{24}n(n+1)(2n+1)
\end{equation}
\begin{equation}
 \sum_{k=0}^n k^4 H_k^{(2)} = H_n^{(-4)}H_{n+1}^{(2)} + \frac{1}{30}H_{n+1} - \frac{1}{60}(n+1)(n+2)(3n^2-n+1)
\end{equation}
\begin{equation}
 \sum_{k=0}^n k^5 H_k^{(2)} = H_n^{(-5)}H_{n+1}^{(2)} - \frac{1}{360}n(n+1)(12n^3+33n^2+7n-7)
\end{equation}
\begin{equation}
 \sum_{k=0}^n H_k^{(3)} = (n+1)H_{n+1}^{(3)} - H_{n+1}^{(2)}
\end{equation}
\begin{equation}
 \sum_{k=0}^n k H_k^{(3)} = H_n^{(-1)}H_{n+1}^{(3)} + \frac{1}{2}H_{n+1}^{(2)} - \frac{1}{2}H_{n+1}
\end{equation}
\begin{equation}
 \sum_{k=0}^n k^2 H_k^{(3)} = H_n^{(-2)}H_{n+1}^{(3)} - \frac{1}{6}H_{n+1}^{(2)} + \frac{1}{2}H_{n+1} - \frac{1}{3}(n+1)
\end{equation}
\begin{equation}
 \sum_{k=0}^n k^3 H_k^{(3)} = H_n^{(-3)}H_{n+1}^{(3)} - \frac{1}{4}H_{n+1} - \frac{1}{8}(n+1)(n-2)
\end{equation}
\begin{equation}
 \sum_{k=0}^n k^4 H_k^{(3)} = H_n^{(-4)}H_{n+1}^{(3)} + \frac{1}{30}H_{n+1}^{(2)} - \frac{1}{60}(n+1)(4n^2-n+2)
\end{equation}
\begin{equation}
 \sum_{k=0}^n k^5 H_k^{(3)} = H_n^{(-5)}H_{n+1}^{(3)} + \frac{1}{12}H_{n+1} - \frac{1}{24}(n+1)(n+2)(n^2-n+1)
\end{equation}
\begin{equation}
 \sum_{k=0}^n H_k^{(4)} = (n+1)H_{n+1}^{(4)} - H_{n+1}^{(3)}
\end{equation}
\begin{equation}
 \sum_{k=0}^n k H_k^{(4)} = H_n^{(-1)}H_{n+1}^{(4)} + \frac{1}{2}H_{n+1}^{(3)} - \frac{1}{2}H_{n+1}^{(2)}
\end{equation}
\begin{equation}
 \sum_{k=0}^n k^2 H_k^{(4)} = H_n^{(-2)}H_{n+1}^{(4)} - \frac{1}{6}H_{n+1}^{(3)} + \frac{1}{2}H_{n+1}^{(2)} - \frac{1}{3}H_{n+1}
\end{equation}
\begin{equation}
 \sum_{k=0}^n k^3 H_k^{(4)} = H_n^{(-3)}H_{n+1}^{(4)} - \frac{1}{4}H_{n+1}^{(2)} + \frac{1}{2}H_{n+1} - \frac{1}{4}(n+1)
\end{equation}
\begin{equation}
 \sum_{k=0}^n k^4 H_k^{(4)} = H_n^{(-4)}H_{n+1}^{(4)} + \frac{1}{30}H_{n+1}^{(3)} - \frac{1}{3}H_{n+1} - \frac{1}{10}(n+1)(n-3)
\end{equation}
\begin{equation}
 \sum_{k=0}^n k^5 H_k^{(4)} = H_n^{(-5)}H_{n+1}^{(4)} + \frac{1}{12}H_{n+1}^{(2)} - \frac{1}{36}(n+1)(2n^2-2n+3)
\end{equation}
Formula (\ref{harmsumg}):
\begin{equation}
 \sum_{k=0}^n H_{n-k} = (n+1)H_{n+1} - (n+1)
\end{equation}
\begin{equation}
 \sum_{k=0}^n k H_{n-k} = H_n^{(-1)} H_{n+1} - \frac{3}{4}n(n+1)
\end{equation}
\begin{equation}
 \sum_{k=0}^n k^2 H_{n-k} = H_n^{(-2)} H_{n+1} - \frac{1}{36}n(n+1)(22n+5)
\end{equation}
\begin{equation}
 \sum_{k=0}^n k^3 H_{n-k} = H_n^{(-3)} H_{n+1} - \frac{1}{48}n(n+1)(25n^2+13n-2)
\end{equation}
\begin{equation}
 \sum_{k=0}^n k^4 H_{n-k} = H_n^{(-4)} H_{n+1} - \frac{1}{1800}n(n+1)(822n^3+693n^2-133n-32)
\end{equation}
\begin{equation}
 \sum_{k=0}^n k^5 H_{n-k} = H_n^{(-5)} H_{n+1} - \frac{1}{240}n(n+1)(98n^4+116n^3-21n^2-19n+6)
\end{equation}
\begin{equation}
 \sum_{k=0}^n H_{n-k}^{(2)} = (n+1)H_{n+1}^{(2)} - H_{n+1}
\end{equation}
\begin{equation}
 \sum_{k=0}^n k H_{n-k}^{(2)} = H_n^{(-1)} H_{n+1}^{(2)} - \frac{1}{2}(2n+1) H_{n+1} + \frac{1}{2}(n+1)
\end{equation}
\begin{equation}
 \sum_{k=0}^n k^2 H_{n-k}^{(2)} = H_n^{(-2)} H_{n+1}^{(2)} - \frac{1}{6}(6n^2+6n+1) H_{n+1} + \frac{1}{6}(n+1)(5n+1)
\end{equation}
\begin{equation}
 \sum_{k=0}^n k^3 H_{n-k}^{(2)} = H_n^{(-3)} H_{n+1}^{(2)} - 3 H_n^{(-2)} H_{n+1} + \frac{13}{24}n(n+1)(2n+1)
\end{equation}
\begin{equation}
\begin{split}
 \sum_{k=0}^n k^4 H_{n-k}^{(2)} & = H_n^{(-4)} H_{n+1}^{(2)} - \frac{1}{30}(30n^4+60n^3+30n^2-1) H_{n+1} \\
  & + \frac{1}{60}(n+1)(77n^3+65n^2+n-2)
\end{split}
\end{equation}
\begin{equation}
\begin{split}
 \sum_{k=0}^n k^5 H_{n-k}^{(2)} & = H_n^{(-5)} H_{n+1}^{(2)} - 5 H_n^{(-4)} H_{n+1} \\
  & + \frac{1}{360}n(n+1)(522n^3+633n^2+37n-67)
\end{split}
\end{equation}
\begin{equation}
 \sum_{k=0}^n H_{n-k}^{(3)} = (n+1)H_{n+1}^{(3)} - H_{n+1}^{(2)}
\end{equation}
\begin{equation}
 \sum_{k=0}^n k H_{n-k}^{(3)} = H_n^{(-1)} H_{n+1}^{(3)} - \frac{1}{2}(2n+1) H_{n+1}^{(2)} + \frac{1}{2}H_{n+1}
\end{equation}
\begin{equation}
 \sum_{k=0}^n k^2 H_{n-k}^{(3)} = H_n^{(-2)} H_{n+1}^{(3)} - \frac{1}{6}(6n^2+6n+1) H_{n+1}^{(2)} + \frac{1}{2}(2n+1)H_{n+1} - \frac{1}{3}(n+1)
\end{equation}
\begin{equation}
\begin{split}
 \sum_{k=0}^n k^3 H_{n-k}^{(3)} & = H_n^{(-3)} H_{n+1}^{(3)} - 3 H_n^{(-2)} H_{n+1}^{(2)} + \frac{1}{4}(6n^2+6n+1)H_{n+1} \\
 & - \frac{1}{8}(n+1)(7n+2)
\end{split}
\end{equation}
\begin{equation}
\begin{split}
 \sum_{k=0}^n k^4 H_{n-k}^{(3)} & = H_n^{(-4)} H_{n+1}^{(3)} - \frac{1}{30}(30n^4+60n^3+30n^2-1) H_{n+1}^{(2)} \\
  & + n(n+1)(2n+1) H_{n+1} - \frac{1}{60}(n+1)(94n^2+59n+2)
\end{split}
\end{equation}
\begin{equation}
\begin{split}
 & \sum_{k=0}^n k^5 H_{n-k}^{(3)} = H_n^{(-5)} H_{n+1}^{(3)} - 5 H_n^{(-4)} H_{n+1}^{(2)} \\
 & + \frac{1}{12}(30n^4+60n^3+30n^2-1) H_{n+1} - \frac{1}{24}(n+1)(57n^3+57n^2+5n-2)
\end{split}
\end{equation}
Formula (\ref{offset5}):
\begin{equation}
 \sum_{k=0}^n H_{n+k} = (2n+1)H_{2n+1} - n H_n - (n+1)
\end{equation}
\begin{equation}
 \sum_{k=0}^n k H_{n+k} = H_n^{(-1)} H_n + \frac{1}{4}n(n+1)
\end{equation}
\begin{equation}
 \sum_{k=0}^n k^2 H_{n+k} = H_n^{(-2)} [2H_{2n+1}-H_n] - \frac{1}{36}n(n+1)(10n+11)
\end{equation}
\begin{equation}
 \sum_{k=0}^n k^3 H_{n+k} = H_n^{(-3)} H_n + \frac{1}{48}n(n+1)(7n^2+7n-2)
\end{equation}
\begin{equation}
 \sum_{k=0}^n k^4 H_{n+k} = H_n^{(-4)} [2H_{2n+1}-H_n] 
   - \frac{1}{1800}n(n+1)(282n^3+603n^2+257n-92)
\end{equation}
\begin{equation}
 \sum_{k=0}^n k^5 H_{n+k} = H_n^{(-5)} H_n + \frac{1}{720}n(n+1)(74n^4+148n^3+7n^2-67n+18)
\end{equation}
\begin{equation}
 \sum_{k=0}^n H_{n+k}^{(2)} = (2n+1)H_{2n+1}^{(2)} - n H_n^{(2)} - H_{2n+1} + H_n
\end{equation}
\begin{equation}
 \sum_{k=0}^n k H_{n+k}^{(2)} = H_n^{(-1)} H_n^{(2)} + \frac{1}{2}(2n+1)[H_{2n+1}-H_n] - \frac{1}{2}(n+1)
\end{equation}
\begin{equation}
\begin{split}
 \sum_{k=0}^n k^2 H_{n+k}^{(2)} & = H_n^{(-2)}[ 2 H_{2n+1}^{(2)} - H_n^{(2)} ] - \frac{1}{6}(6n^2+6n+1)[H_{2n+1}-H_n] \\
 & + \frac{1}{6}(n+1)(3n+1)
\end{split}
\end{equation}
\begin{equation}
 \sum_{k=0}^n k^3 H_{n+k}^{(2)} = H_n^{(-3)} H_n^{(2)} + 3 H_n^{(-2)}[H_{2n+1}-H_n] 
 - \frac{1}{24}n(n+1)(14n+13)
\end{equation}
\begin{equation}
\begin{split}
 \sum_{k=0}^n k^4 H_{n+k}^{(2)} & = H_n^{(-4)}[ 2 H_{2n+1}^{(2)} - H_n^{(2)} ] \\
  & - \frac{1}{30}(30n^4+60n^3+30n^2-1)[H_{2n+1}-H_n] \\
  & + \frac{1}{60}(n+1)(35n^3+47n^2+5n-2)
\end{split}
\end{equation}
\begin{equation}
\begin{split}
 \sum_{k=0}^n k^5 H_{n+k}^{(2)} & = H_n^{(-5)} H_n^{(2)} + 5 H_n^{(-4)}[H_{2n+1}-H_n] \\
  & - \frac{1}{360}n(n+1)(74n+67)(3n^2+3n-1)
\end{split}
\end{equation}
\begin{equation}
 \sum_{k=0}^n H_{2n+k} = (3n+1) H_{3n+1} - 2n H_{2n} - (n+1)
\end{equation}
\begin{equation}
 \sum_{k=0}^n k H_{2n+k} = -\frac{1}{2}n(3n+1) H_{3n+1} + n(2n+1) H_{2n} + \frac{3}{4}n(n+1)
\end{equation}
\begin{equation}
\begin{split}
 \sum_{k=0}^n k^2 H_{2n+k} & = \frac{1}{2}n(2n+1)(3n+1) H_{3n+1} - \frac{1}{3}n(2n+1)(4n+1) H_{2n} \\
 & - \frac{1}{36}n(n+1)(40n+17)
\end{split}
\end{equation}
\begin{equation}
\begin{split}
 \sum_{k=0}^n k^3 H_{2n+k} & = -\frac{1}{4}n^2(3n+1)(5n+3) H_{3n+1} + n^2(2n+1)^2 H_{2n} \\
 & + \frac{1}{48}n(n+1)(77n^2+45n-2)
\end{split}
\end{equation}
\begin{equation}
\begin{split}
 \sum_{k=0}^n k^4 H_{2n+k} & = \frac{1}{10}n(2n+1)(3n+1)(11n^2+5n-1) H_{3n+1} \\
 & - \frac{1}{15} n(2n+1)(4n+1)(12n^2+6n-1) H_{2n} \\
 & - \frac{1}{1800}n(n+1)(4692n^3+4143n^2+467n-152)
\end{split}
\end{equation}
\begin{equation}
\begin{split}
 \sum_{k=0}^n k^5 H_{2n+k} & = -\frac{1}{4}n^2(3n+1)(14n^3+16n^2+3n-1) H_{3n+1} \\
 & + \frac{1}{3} n^2(2n+1)^2(8n^2+4n-1) H_{2n} \\
 & + \frac{1}{240}n(n+1)(1036n^4+1148n^3+177n^2-87n+6)
\end{split}
\end{equation}
Formula (\ref{offset9}):
\begin{equation}
 \sum_{k=0}^n H_{2n-k} = (2n+1)H_{2n+1} - n H_n - (n+1)
\end{equation}
\begin{equation}
 \sum_{k=0}^n k H_{2n-k} = n(2n+1)H_{2n+1} - \frac{1}{2}n(3n+1) H_n - \frac{5}{4}n(n+1)
\end{equation}
\begin{equation}
\begin{split}
 \sum_{k=0}^n k^2 H_{2n-k} & = \frac{1}{3}n(2n+1)(4n+1)H_{2n+1} - \frac{1}{6}n(2n+1)(7n+1) H_n \\
 & - \frac{1}{36}n(n+1)(64n+11)
\end{split}
\end{equation}
\begin{equation}
\begin{split}
 \sum_{k=0}^n k^3 H_{2n-k} & = n^2(2n+1)^2 H_{2n+1} - \frac{1}{4}n^2(3n+1)(5n+3) H_n \\
 & - \frac{1}{48}n(n+1)(131n^2+51n-2)
\end{split}
\end{equation}
\begin{equation}
\begin{split}
 \sum_{k=0}^n k^4 H_{2n-k} & = \frac{1}{15}n(2n+1)(4n+1)(12n^2+6n-1) H_{2n+1} \\
 & - \frac{1}{30}n(2n+1)(93n^3+66n^2+2n-1) H_n \\
 & - \frac{1}{1800}n(n+1)(7932n^3+4953n^2-43n-92)
\end{split}
\end{equation}
\begin{equation}
\begin{split}
 \sum_{k=0}^n k^5 H_{2n-k} & = \frac{1}{3}n^2(2n+1)^2(8n^2+4n-1) H_{2n+1} \\
 & - \frac{1}{4}n^2(3n+1)(14n^3+16n^2+3n-1) H_n \\
 & - \frac{1}{720}n(n+1)(5308n^4+4604n^3+221n^2-251n+18)
\end{split}
\end{equation}

\section{Computer Program}\label{program}

The Mathematica$^{\textregistered}$ \cite{W03} program used to compute the expressions
is given below.
\begin{alltt}
Unprotect[Power];
0^0=1;
Protect[Power];
BernPlus[n_]:=BernoulliB[n]+KroneckerDelta[n,1]
PowSum[p_]:=
 Factor[1/(p+1)Sum[Binomial[p+1,k]BernPlus[p-k+1]n^k,\{k,1,p+1\}]]
HarmFun[p_]:=If[p>0,HarmonicNumber[n+1,p]-1/(n+1)^p,PowSum[-p]]
HarmSumF[p_,m_]:=Simplify[HarmFun[-p]HarmFun[m]+HarmFun[m-p]
 -1/(p+1)Sum[Binomial[p+1,k]BernPlus[p-k+1]HarmFun[m-k],\{k,1,p+1\}]]
HarmSumG[p_,m_]:=Simplify[HarmFun[-p]HarmFun[m]
 +KroneckerDelta[p,0]HarmFun[m]+1/(p+1)Sum[(-1)^k Binomial[p+1,k]
 (BernPlus[p-k+1]+(p-k+1)HarmFun[k-p])HarmFun[m-k],\{k,1,p+1\}]]
HarmSumF[p_,m_,s_]:=Simplify[Sum[(-1)^k Binomial[p,k]s^k
 (ReplaceAll[HarmSumF[p-k,m],n->n+s]
 -ReplaceAll[HarmSumF[p-k,m],n->s-1]),\{k,0,p\}]]
HarmSumG[p_,m_,s_]:=Simplify[ReplaceAll[HarmSumG[p,m],n->n+s]
 -Sum[Binomial[p,k](n+1)^(p-k)ReplaceAll[HarmSumG[k,m],n->s-1],
 \{k,0,p\}]]
HarmTable[m_]:=Table[HarmonicNumber[n+1,i],\{i,m\}]
HarmTable[m_,s_Integer]:=Table[HarmonicNumber[n+s+1,i],\{i,m\}]
HarmTable[m_,s_]:=Table[HarmonicNumber[s+Mod[i+1,2](n+1),
 Quotient[i+1,2]],\{i,2m\}]
HarmonicSumF[p_,m_]:=Module[\{t=HarmTable[m],u\},
 u=Factor[CoefficientArrays[HarmSumF[p,m],t]];
 u[[1]]+Dot[u[[2]],t]]
HarmonicSumG[p_,m_]:=Module[\{t=HarmTable[m],u\},
 u=Factor[CoefficientArrays[HarmSumG[p,m],t]];
 u[[1]]+Dot[u[[2]],t]]
HarmonicSumF[p_,m_,s_]:=Module[\{t=HarmTable[m,s],u\},
 u=Factor[CoefficientArrays[HarmSumF[p,m,s],t]];
 u[[1]]+Dot[u[[2]],t]]
HarmonicSumG[p_,m_,s_]:=Module[\{t=HarmTable[m,s],u\},
 u=Factor[CoefficientArrays[HarmSumG[p,m,s],t]];
 u[[1]]+Dot[u[[2]],t]]

(* Compute some examples *)
HarmonicSumF[4,3]//TraditionalForm
HarmonicSumF[2,2,n]//TraditionalForm
HarmonicSumF[3,1,2n]//TraditionalForm
HarmonicSumG[3,2]//TraditionalForm
HarmonicSumG[3,1,n]//TraditionalForm
HarmonicSumF[2,1,0]//TraditionalForm
HarmonicSumF[2,1,3]//TraditionalForm
HarmonicSumF[2,1,s]//TraditionalForm
\end{alltt}

\pdfbookmark[0]{References}{}

\bibliographystyle{amsplain}

\end{document}